\documentclass[twoside]{aiml22}

\usepackage{aiml22macro}

\usepackage{microtype}
\usepackage[british]{babel}
\usepackage{graphicx}
\usepackage{amsmath}
\usepackage{amssymb}
\usepackage{mathtools}
\usepackage{cleveref}
\usepackage[shortlabels]{enumitem}

\usepackage{tikz}
\usetikzlibrary{calc}
\usetikzlibrary{arrows.meta}
\usetikzlibrary{decorations.markings}

\newcounter{saveenumerate}
\makeatletter
\newcommand{\enumeratext}[1]{%
\setcounter{saveenumerate}{\value{enum\romannumeral\the\@enumdepth}}
\end{enumerate}
#1
\begin{enumerate}[I]
\setcounter{enum\romannumeral\the\@enumdepth}{\value{saveenumerate}}%
}
\makeatother



\newcommand{\reals}{\mbox{\(\mathbb R\)}}

\def\centerarc[#1](#2)(#3:#4:#5)
    { \draw[#1] ($(#2)+({#5*cos(#3)},{#5*sin(#3)})$) arc (#3:#4:#5); }

\newcommand{\lleft}{\var{left}} 
\newcommand{\rright}{\var{right}} 

\newcommand{\modal}{\mathbf}
\newcommand{\F}{\modal F}
\renewcommand{\P}{\modal P}
\newcommand{\G}{\modal G}
\renewcommand{\H}{\modal H}
\newcommand{\oor}{\vee}
\newcommand{\aand}{\wedge}
\newcommand{\limplies}{\rightarrow}
\newcommand{\var}{\mathit}
\newcommand{\eloise}{\var{eloise}}

\newcommand{\pos}{\var{\beta}}

\newcommand{\white}{\var{white}}
\renewcommand{\And}{\bigwedge}
\newcommand{\Or}{\bigvee}

\newcommand{\col}{\var{col}}

\newcommand{\ind}{\var{index}}
\newcommand{\rank}{\var{depth}}

\newcommand{\up}{\var{up}}
\newcommand{\down}{\var{down}}
\newcommand{\win}{\var{win}}
\newcommand{\counter}{\var{counter}}

\newcommand\set[1]{{ \{#1\} }}

\renewcommand \c[1]{{\mathcal #1}}

\newcommand\UM{{\blacksquare}}



\begin{document}

\begin{frontmatter}
  \title{EXPTIME-hardness of higher-dimensional Minkowski spacetime}
  \author{Robin Hirsch}\footnote{r.hirsch@ucl.ac.uk}
  \address{Department of Computer Science, University College London \\ Gower Street \\ London WC1E 6BT \\ United Kingdom}
  \author{Brett McLean}\footnote{brett.mclean@ugent.be \\The second author was supported by the Research Foundation -- Flanders (FWO) under the SNSF--FWO Lead Agency Grant 200021L 196176 (SNSF)/G0E2121N (FWO).}
  \address{Department of Mathematics: Analysis, Logic and Discrete Mathematics, Ghent University\\ Building S8, Krijgslaan 281 \\ 9000 Ghent \\ Belgium}

  \begin{abstract}
  We prove the EXPTIME-hardness of the validity problem for the basic temporal logic on Minkowski spacetime with more than one space dimension. We prove this result for both the lightspeed-or-slower and the slower-than-light accessibility relations (and for both the irreflexive and the reflexive versions of these relations). As an auxiliary result, we prove the EXPTIME-hardness of validity on any frame for which there exists an embedding of the infinite complete binary tree satisfying certain conditions. The proof is by a reduction from the two-player corridor-tiling game.
  \end{abstract}

  \begin{keyword}
   Temporal logic, tense logic, Minkowski spacetime, EXPTIME-hard, corridor-tiling game
  \end{keyword}
 \end{frontmatter}

\section{Introduction}
     
Temporal logic has traditionally treated time as absolute, independent of other aspects of state, such as position. However, special and general relativity tell us that no absolute view of time reflects physical reality, and we must be content with understanding space and time jointly in the form of a unified spacetime. Thus the study of temporal logics of spacetime can be seen as a natural and foundational topic in temporal logic.

The frames we study in this paper are: Minkowski spacetime with \emph{lightspeed-or-slower} accessibility and Minkowski spacetime with \emph{slower-than-lightspeed} accessibility. Minkowski spacetime is the flat spacetime of special relativity, where light travels in straight lines. 
Let $m$ be the number of space dimensions. Then Minkowski spacetime with (irreflexive) lightspeed-or-slower accessibility can be realised as the Kripke frame $(\mathbb R^{m+1}, <)$ where \[(x_1, \dots, x_m, t) < (y_1, \dots, y_m, t') \iff t < t' \text{ and }\sum_i (x_i -y_i)^2 \leq (t'-t)^2.\]
(Here the order relation on the right-hand side is the usual ordering on the reals.) Minkowski spacetime with (irreflexive) slower-than-lightspeed accessibility can be realised as the frame $(\mathbb R^{m+1}, \prec)$ where \[(x_1, \dots, x_m, t) \prec (y_1, \dots, y_m, t') \iff t < t' \text{ and }\sum_i (x_i -y_i)^2 < (t'-t)^2.\]
Depictions of these frames for $m = 2$ can be found in \Cref{fig:space}.
\begin{figure}[h]
\begin{center}
\begin{tikzpicture}[scale=1]
\draw[dotted, white, fill=blue!10] (0,0) -- ++(-2,2) -- ++(4,0) -- ++(-2,-2);
\draw[dotted, white, fill=brown!30] (0,0) -- ++(-2,-2) -- ++(4,0) -- ++(-2,2);
\draw [->] (-2,-.5) -- (2, -.5);
\draw[->] (-1, -2) -- (-1, 2);
\draw[->] (-2, -1) -- (2, 1);
\draw(-2, -2) -- (2,2);
\draw(-2, 2) -- (2, -2);
\node [right] at (2,-.5) {$x_1$};
\node [right] at (2,1) {$x_2$};
\node [above] at (-1,2) {$t$};
\draw[dashed] (0,1) ellipse (.95cm and .3cm);
\draw[dashed] (-1.90,2) arc(180:360:1.9cm and .6cm);
\draw[dashed] (0,-1) ellipse (.95cm and .3cm);
\draw[dashed] (-1.90,-2) arc(180:0:1.9cm and .6cm);

\draw [fill] (0,0) circle [radius=2pt];
\end{tikzpicture}
\hspace{1cm}
\begin{tikzpicture}[scale=1]
\draw[dotted, white, fill=blue!10] (0,0) -- ++(-2,2) -- ++(4,0) -- ++(-2,-2);
\draw[dotted, white, fill=brown!30] (0,0) -- ++(-2,-2) -- ++(4,0) -- ++(-2,2);
\draw [->] (-2,-.5) -- (2, -.5);
\draw[->] (-1, -2) -- (-1, 2);
\draw[->] (-2, -1) -- (2, 1);
\draw[dotted](-2, -2) -- (2,2);
\draw[dotted](-2, 2) -- (2, -2);
\node [right] at (2,-.5) {$x_1$};
\node [right] at (2,1) {$x_2$};
\node [above] at (-1,2) {$t$};
\draw[dashed] (0,1) ellipse (.95cm and .3cm);
\draw[dashed] (-1.90,2) arc(180:360:1.9cm and .6cm);
\draw[dashed] (0,-1) ellipse (.95cm and .3cm);
\draw[dashed] (-1.90,-2) arc(180:0:1.9cm and .6cm);

\draw [fill] (0,0) circle [radius=2pt];
\end{tikzpicture}
\end{center}
\caption{\label{fig:space}Minkowski spacetime frames $(\mathbb R^{3}, <)$ and $(\mathbb R^{3}, \prec)$}
\end{figure}
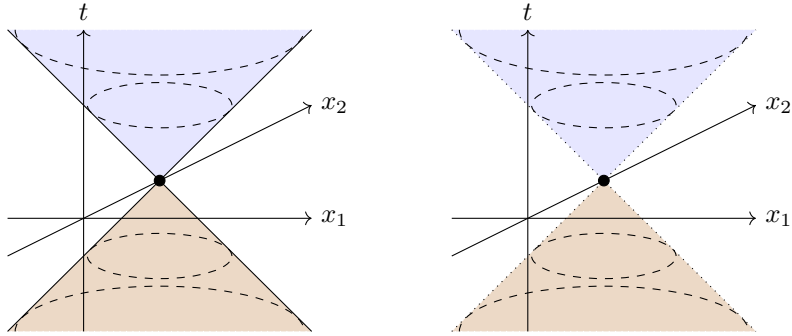

One may also consider reflexive versions $\leq$ and $\preceq$ of these accessibility relations, given by the reflexive closure of the respective irreflexive relations. This distinction will not play an important role in this paper.

The language we use is the most classical temporal language, the language of Prior's tense logic \cite{prior1957time}. Formulas are built from a countable supply $p, q, \dots$ of propositional variables using the standard propositional connectives $\neg$ and $\vee$ and the unary temporal operators $\F$ (`at some time in the future') and $\P$ (`at some time in the past'). We call the formulas of this language $(\F, \P)$-formulas. Usual classical propositional abbreviations apply, and $\G \coloneqq\neg \F \neg$ and $\H \coloneqq \neg \P \neg$.
A \emph{modal} formula is built using propositional connectives and $\F$ (but no $\P$).

It is known that the  modal validities of reflexive Minkowski spacetime with $m$ spatial dimensions are axiomatised by $\modal{S4.2}$ (reflexive, transitive, confluent)  \cite{Gol80}, regardless of choice of $m\geq 1$, and regardless of whether speed-of-light is allowed; hence the logic is PSPACE-complete \cite{Shap05}.   For irreflexive spacetime, with slower-than-light accessibility the validities again do not depend on the dimension, and are given by a logic called $\modal {OI.2}$ (transitive, confluent, serial, two-dense: $\F p \wedge \F q \limplies \F(\F p \wedge \F q)$) \cite{DBLP:conf/aiml/ShapirovskyS02}, which is also PSPACE-complete \cite{Shap05}. With irreflexive lightspeed-or-slower accessibility, the logics depend on the dimension, and there is a formula satisfiable with two  spatial dimensions yet not with only one spatial dimension \cite{Gol80}.  By a simple reduction from the reflexive cases it is clear that  the modal logic of irreflexive lightspeed-or-slower accessibility is PSPACE-hard in all cases $m\geq 1$, though we could not find a proof in the literature that in all cases the logic is in PSPACE.

For temporal validities, (i.e.\ valid $(\F, \P)$-formulas) over 
two-dimensional Minkowski spacetime with lightspeed-or-slower accessibility, that is, either $(\mathbb R^{2}, <)$ or $(\mathbb R^{2}, \leq)$,   validity is PSPACE-complete \cite{HR18}, and  the same is true for slower-than-light accessibility \cite{HM18}.\footnote{A contrasting result is that with \emph{exactly lightspeed} accessibility, validity is undecidable \cite{shapirovsky2010simulation}.} However, little is known about the logics of the higher-dimensional frames, except that they differ from those in two dimensions \cite{HR18}. In the problem (5) at the conclusion of \cite{HR18}, Hirsch and Reynolds ask for the decidability of the $(\mathbb R^{3}, <)$ validities and conjecture undecidability.

In this paper, we make a small amount of progress towards identifying the complexity of the validity problem in higher dimensions, by proving an EXPTIME lower bound (with any of the four accessibility relations: lightspeed-or-slower or slower-than-light and reflexive or irreflexive). We do this by reducing the two-person corridor-tiling game to satisfiability in our higher-dimensional frames. We describe this tiling game, which is known to be EXPTIME-complete \cite{CHLEBUS1986374}, in the following section. In \Cref{sec:reduction} we carry out the reduction, giving a general EXPTIME-hardness result (Theorem~\ref{thm:main}), and in \Cref{sec:minkowski} we apply this theorem to Minkowski spacetime frames (Theorem~\ref{thm:spacetime}). \Cref{sec:problems} lists open problems.

\section{The two-person corridor-tiling game}

In order that this paper be self contained, in this section we present the two-person corridor-tiling game. Our version of the game is that found in \cite[\S 6.8]{blackburn_rijke_venema_2001}.

A Wang tile type, $T$, is a square with its sides labelled by colours $\lleft(T)$, $\rright(T)$, $\up(T)$ and $\down(T)$ \cite{6773658}. From this presentation it is clear that tiles are `fixed in orientation', that is, what is obtained by rotating a tile by $90^\circ$ is considered distinct from the original tile type. \Cref{fig:Wang} shows the conventional way of depicting a Wang tile.

\begin{figure}[h]
\begin{center}
\begin{tikzpicture}
\path[ fill=gray!12] (0,0) -- ++(-1,1) -- ++(2,0) -- ++(-1,-1);
\path[ fill=gray!60] (0,0) -- ++(-1,-1) -- ++(2,0) -- + (-1,1);
\path[ fill=blue!20] (0,0) -- ++(-1,1) -- ++(0,-2) ;
\path[ fill=brown!30] (0,0) -- ++(1,-1) -- ++(0,2) ;
\end{tikzpicture}
\end{center}
\caption{\label{fig:Wang}Depiction of a Wang tile}
\end{figure}
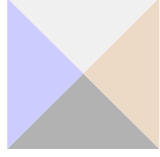

\noindent\emph{Game instances.} An instance of the two-player corridor-tiling game is a pair  $((T_0, \dots, T_{s+1}),\allowbreak (I_1, \dots, I_n))$, where $(T_0, \dots, T_{s+1})$ is a sequence of tile types and $(I_1, \dots, I_n)$ is a sequence of tiles drawn from $\{T_0, \dots, T_{s+1}\}$.

\smallskip

\noindent\emph{The players and board.} The game is played between two players---Eloise and Abelard---on an $n \times \omega$ grid---the `corridor'. There are walls to the left of the first column and to the right of the $n$th column, which we consider to be columns $0$ and $n+1$ respectively, and throughout the game both these columns are filled entirely with instances of tile type $T_0$. We assume this gives both walls the same colour, $\white$ say. At the start of the game the first row of the
corridor is filled with the tiles $(I_1, \ldots, I_n)$ in that order.  (This is the only purpose of the data $(I_1, \dots, I_n)$, beyond determining the value of $n$.\footnote{Note though that the presence of $(I_1, \dots, I_n)$ in instance specification ensures the corridor width is polynomial---not exponential---in the size of the input, matching the stipulation in Chlebus' original presentation that $n$ be encoded in unary.}) This initial position is depicted in \Cref{fig:corridor}.
\begin{figure}[h]
\begin{center}
\begin{tikzpicture}
\draw (0,0)--(5,0);
\draw[->] (0,0)--(0,3);
\draw[->] (5,0)--(5,3);
\draw(0,1)--(5,1);
\draw (1,0)--(1,1);
\draw(2,0)--(2,1);
\draw(4,0)--(4,1);
\draw[dashed](1,1)--(1,2)--(0,2);

\node at (-.5,.5) {$T_0$};
\node at (-.5,1.5) {$T_0$};
\node at (-.5,2.5) {$T_0$};
\node at (5.5,.5) {$T_0$};
\node at (5.5,1.5) {$T_0$};
\node at (5.5,2.5) {$T_0$};

\node at (.5,.5) {$I_1$};
\node at (1.5,.5) {$I_2$};
\node at (4.5,.5) {$I_n$};
\node at (3,.5){$\cdots$};
\node (m) at (2,2.35){Eloise plays next tile};
\draw[->](m)--(.5,1.5);

\end{tikzpicture}
\end{center}
\caption{\label{fig:corridor}Game instance $((T_0, \dots, T_{s+1}),\allowbreak (I_1, \dots, I_n))$ at the start of play}
\end{figure}
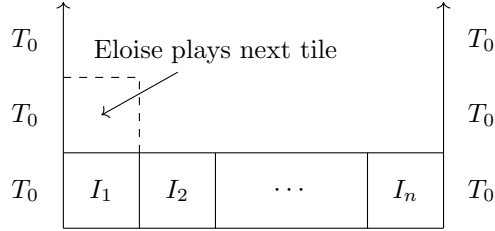

\smallskip

\noindent\emph{The play.} During play, the two players alternate, with Eloise playing first. To make a legal move, a player must chose a tile type and place one instance of that tile type on the board. The placement of the new tile is fixed: it is placed in the leftmost available position in the first incomplete row. It is required that the edge-colours of the placed tile match up with those of adjacent tiles already on the board.

\smallskip

\noindent\emph{Winning conditions.} If at any point during the play, tile type $T_{s+1}$ is placed in the first column then the game ends and Eloise wins. 
 In all other cases,  Abelard wins; there are two ways this can happen.   Firstly, if $T_{s+1}$ has not yet been placed in the first column, and the current player (Eloise or Abelard) is unable to make a move then the game ends and Abelard wins.  
  The other possible way Abelard can win is if the  play goes on infinitely long without $T_{s+1}$ being placed in the first column, and where each player always makes a legal move.

\smallskip

Based on these rules, in order to have all relevant information about the state of play it is sufficient to know the \emph{position} $(p, \bar T, i)$ consisting of:
\begin{enumerate}
\item
the player $p\in\set{\exists,\forall}$ whose turn it is,
\item
 a sequence $\bar T$ of $n$ tiles, indicating the last tile placed in columns $1$ to $n$, 
 \item
  a column index $i$, where $1\leq i\leq n$, indicating the column where the next tile must be placed. 
	  \end{enumerate}


\smallskip

\noindent\emph{The game tree and strategy.} Instances of the corridor-tiling game may analysed via their game tree. Let $\c I = ((T_0,\ldots, T_{s+1}),(I_1, \ldots, I_n))$ be an instance.  Let $p_0=(\exists, (I_1,\ldots, I_n), 1)$ be the initial position in  the game tree $\c T(\c I)$.  Each node of $\c T(\c I)$ is labelled by a position, and the root of $\c T(\c I)$ is labelled by the initial position $p_0$.  In any position there are at most $s+2$ possible moves (as there are $s+2$ tile types).   Each possible move determines the position of a child node. Each branch 
 is either a win for Eloise or Abelard, not both.  

A \emph{strategy} for Eloise is a subtree of the game tree including the root,  all children of  included nodes where it is Abelard's turn, and at least one child of included nodes where it is Eloise's turn.  Such a strategy is a winning strategy if   all branches are wins for Eloise.  Of course, whether such a winning strategy exists depends on the instance. The decision problem associated to the two-person corridor-tiling game is the set of instances for which Eloise has a winning strategy.

The two-person corridor-tiling game was first presented in \cite{CHLEBUS1986374}, where it was called the \emph{rectangle tiling game} and was proven to be EXPTIME-complete.


\section{Reduction from corridor-tiling game}\label{sec:reduction}
In this section we take an arbitrary instance $\c I = ((T_0,\ldots, T_{s+1}),(I_1, \ldots, I_n))$ of the two-person corridor-tiling game and compute an associated temporal formula $\phi_\c I$. Then we show that for any Kripke frame into which there is a certain kind of embedding of a complete binary tree with no leaves, $\c I$ is a yes-instance if and only if $\phi_\c I$ is satisfiable in the frame. 

The rough idea is that we can use the image of the embedded tree as a scaffolding for drawing the game tree $\c T(\c I)$ in the frame, simply by labelling (images of) nodes using propositional variables. (We do however re-encode $\c T(\c I)$, with its finite branching factor, as a binary-branching tree, in order that the embedding be easier to produce.) Then $\phi_\c I$ expresses that a given point is the root of a game tree for which Eloise has a winning strategy.

 We first introduce some propositional formulas and definable modalities, to help express the formula $\phi_\c I$.

In Minkowski spacetime we may express the universal modality $\UM$ by
\[
\UM\varphi:= \G\H\varphi.\]
More generally, let $(V, <)$ be any frame that is both \emph{transitive} and  \emph{confluent} ($x < y, z$ implies there is $w$ with $y, z < w$). 
 Then $\UM$ is `for all within the current weakly connected component' (of $(V, <)$, viewed as a directed graph), which is strong enough for our purposes.\footnote{Technically this is not true if $<$ is empty on a (necessarily singleton) connected component. If this occurs in the frame then one can add a conjunct $\F\top$ to the $\phi_\mathcal I$ we describe to prevent the formula being satisfied at such a point.} Note that we will refer to elements of $V$ as points.

We will use the following propositional variables. 
\begin{enumerate}
\item $f$ to denote a `forbidden set' $F$.
\item $\ind_i$, for $1\leq i\leq n$, to indicate a node of the tree at which the next tile is to be played in column $i$. (If no $\ind_i$ holds at a point in $V$ then the point is not a node.)
\end{enumerate}
Later, in the definition~\eqref{eq:loz}, we use $F$ to help define a modality $\lozenge$ (see also Figure~\ref{fig:loz}).

The remaining variables give information only at nodes and have no significance elsewhere (where they may be assigned arbitrarily).
\begin{enumerate}\setcounter{enumi}{3}
\item
$\col_i(T)$, for all $0 \leq i \leq n+1$ and all $T \in \{T_0, \dots, T_{s+1}\}$, used to indicate that the latest tile placed in column $i$ was of type $T$.
\item
$\eloise$, true if it is Eloise's turn to play, false if Abelard's.
\item\label{last}
$\win$ to indicate that if play continues from the current position, Eloise has a winning strategy.

\end{enumerate}

For the encoding of $\c T(\c I)$ as a labelled binary tree $\c B$, the maximum number of children of a node of $\c T(\c I)$ is $s+2$; let  $b = \lceil \log_2 (s+2)\rceil$.   By duplicating branches, we may assume that the branching factor of $\c T(\c I)$ is always either $2^b$ or zero.  We also assume $b\geq 3$.

Let $\Delta$ be a complete binary tree of depth $b$, whose nodes are strings over $\set{0, 1}$ of length at most $b$, and with edges $(x, x0)$ and $(x, x1)$ when $x$ is a string of length less than $b$. The root of $\Delta$ is the empty string $\varepsilon$.  Each node of $\Delta$ has a depth equal to the length of the bit-string, so the root has depth $0$ and the leaves have depth $b$.   

 For the \emph{tree structure} of $\c B$, first take a forest consisting of copies $\Delta_p$ of $\Delta$ indexed by nodes of $\c T(\c I)$, so a typical node will be $(p, x)$ where $p\in\c T(\c I)$ and $x\in\Delta$. Now for every non-leaf node $p$ of $\c T(\c I)$ choose a bijection $\theta_p$ between the leaves of $\Delta_p$ and those nodes $q$ of $\c T(\c I)$ where there is a move from $p$ to $q$ in $\c T(\c I)$ (each set has cardinality $2^b$). Now, for each non-leaf node $p$ of $\c T(\c I)$ identify each leaf $(p, x)$ of $\Delta_p$ with $(\theta_p(p,x), \varepsilon)$. Finally, remove any leaf nodes (precisely the nodes of depth $b$ that were not identified). 
   This defines the binary tree structure of $\c B$.      After this identification,  the nodes are $(p, x)$ where $p$ is a node of $\c T(\c I)$, and the node $x$ of $\Delta$ has depth at most $b-1$.   The children of $(p, x)$  are $(p, x0), (p, x1)$ when the depth of $x$ is less than $b-1$.  When the depth of $x$ is $b-1$ the children of $(p, x)$ are $(q, \varepsilon)$ where $q$ can be one of two successor positions of $p$ in $\c T(\c I)$.  
  For the \emph{labelling} of $\c B$, each node $(p, x)$ is labelled by the node $p$ of $\c T(\c I)$ and the depth of $x$.   
  
  We thus also have propositions
  \begin{enumerate}\setcounter{enumi}{6}
  \item $\rank_j$, for $0\leq j\leq b-1$, to indicate the depth of $x$ in $\Delta$.
  \end{enumerate} 
  
Let
\begin{align*}
\beta (i, j)&:=\ind_i\wedge\rank_j,
\shortintertext{and}
\beta&:=\bigvee_{1\leq i\leq n}\ind_i.
\end{align*}
Thus $\beta$ is the proposition `is a node'.

Define a function $^+:[1,n]\times[0,b-1] \to [1,n]\times[0,b-1]$ that updates the column-index and depth by either incrementing the depth if less than $b-1$ leaving  column-index fixed,  else updating the   column-index and resetting depth to $0$.
That is, for $(i, j)\in[1,n]\times[0,b-1]$, let 
\[(i, j)^+=\left\{\begin{array}{ll} (i, j+1)&j\leq b-2\\
(i+1,0)&j=b-1,\; i<n\\
(1,0)&j=b-1,\; i=n\,.
\end{array}
\right.
\]
 If $(i, j)$ is  the index and depth in an encoded version of the game where  
  $b$ levels encode one move, then  $(i, j)^+$ is  the index and depth of children in this encoded game.  Since $b\geq 3$ we know that $(i, j)^{++}\neq(i,j)$ (this allows us to distinguish the children from the parent of any node).
 
\smallskip

We will now define a pair of modalities  $\lozenge, \Box$ that will express `possibly for a child' and `necessarily for children' respectively, and thus will do most of the heavy lifting in the formula $\phi_\c I$. It may be helpful to know
that for the Minkowski frames, we will be embedding the binary tree into the
(hyper)plane $t = 0$. 
To give some intuition, 
  Figure~\ref{fig:loz} illustrates a point where $\lozenge\varphi$ holds. 
  
  We define $\lozenge$ and $\Box$ as follows.
\begin{align}
\label{eq:loz}
\lozenge \varphi:&= \bigvee_{1\leq i\leq n,\; 0\leq j\leq b-1} \beta(i, j) \wedge \F(\P(\beta((i, j)^+)\wedge\varphi)\wedge\H\neg f)\\
\Box\varphi:&= \bigwedge_{1\leq i\leq n,\; 0\leq j\leq b-1} \beta(i, j) \limplies\G(\H\neg f \limplies \H(\beta((i, j)^+)\limplies\varphi)) \nonumber
\end{align}
Note that $\Box\varphi$ is indeed $\modal K_t$-equivalent to $\neg\lozenge \neg\varphi$, and that ${\modal K_t}\vdash(\Box\varphi \aand \Box\psi )\limplies \Box (\varphi \aand \psi)$.\footnote{Recall that $\modal K_t$ is the temporal analogue of modal $\modal K$, that is, the set of temporal formulas valid on all frames.}

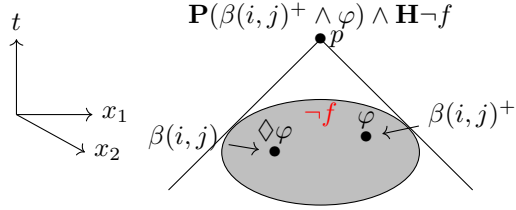
\begin{figure}
\begin{center}
\begin{tikzpicture}[scale=1]
\draw[dotted, white] (0,0) -- ++(-2,-2) -- ++(4,0) -- ++(-2,2);

\draw[->] (-4,-1) to (-4,0);
\draw[->](-4,-1) to (-3,-1);
\draw[->](-4,-1) to (-3.1,-1.5);

\node [above] at (-4,0){$t$};

\node [right] at (-3,-1){$x_1$};
\node[right] at (-3.1,-1.5){$x_2$};
\draw(-2, -2) -- (0,0);
\draw(0,0) -- (2, -2);


\begin{scope}
\path [inner color=white,   opacity=0, outer color=white] (0cm,-1.5cm) {ellipse(2.2cm and 1.1cm)};
\end{scope}
\draw[fill=gray!50] (0cm,-1.5cm) {ellipse(1.3cm and .7cm)};

\node(x) at (0,0){$\bullet$};
\node[right] at (x) {$p$};

\node(m) at (-.6,-1.5){$\bullet$};
\node[above] at (m){$\lozenge\varphi$};

\node(n) at (.6,-1.3){$\bullet$};
\node[below] at (n){};
\node[above] at (n){$\varphi$};


\node(r) at (2, -1) {$\beta(i, j)^+$};

\draw[->] (r) to (n);

\node[red] at (0,-1){$\neg f$};

\node[above] at (x) {$\P(\beta(i, j)^+\wedge\varphi)\wedge \H \neg f$};

\node(l) at (-1.8, -1.3){$\beta(i,j)$};

\draw[->] (l)  to (m);

\end{tikzpicture}
\caption{\label{fig:loz} The formula $\lozenge\varphi$ holds at a point if some $\beta(i, j)$ holds there, and a point $p$ in the future sees a point in its past where $\varphi\wedge\beta(i, j)^+$ holds, but no point satisfying $f$ is in the past of $p$. In particular, $f$ must not hold at any point in the shaded disc where the past of $p$ intersects the spatial plane. }
\end{center}
\end{figure}

Now is a good time to state our main theorem.

\begin{theorem}\label{thm:main}
Let $(V, <)$ be any  transitive and confluent 
 Kripke frame, and let $\c B$ be the infinite complete binary tree. (We view the edges, $E(\c B)$, of $\c B$ as directed from parent to child.) Suppose there is a map ${^\prime}:\c B \to V$
 and a subset $F\subseteq V$ such that the following condition holds for all distinct $x, y\in \c B$.
\begin{equation}\label{cond}
\begin{gathered}
 (x, y) \in E(\c B)\vee (y, x)\in E(\c B)\\ \iff\\  \exists z \in V (z>x'\wedge z>y'\wedge \forall w(w<z \implies w\notin F))
 \end{gathered}
\end{equation}
Then the satisfiability of $(\F, \P)$-formulas over $(V, <)$  is EXPTIME-hard.
\end{theorem}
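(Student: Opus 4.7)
The plan is to define, in polynomial time, a temporal formula $\phi_\c I$ that is satisfiable on $(V,<)$ iff Eloise wins the corridor-tiling game $\c I$; since EXPTIME is closed under complement, this yields EXPTIME-hardness of validity as well. The formula is a conjunction of three kinds of clauses. An \emph{initial-position} atom asserts at the evaluation point that $\ind_1\aand\rank_0\aand\eloise\aand\col_0(T_0)\aand\bigwedge_i\col_i(I_i)\aand\col_{n+1}(T_0)\aand\win$. A list of \emph{global invariants} under $\UM$ then (i)~pins the propositional type at every node (where $\beta$ holds) so that it describes a single game state; (ii)~enforces, via $\Box$, that every $b$-level descent of the binary tree corresponds to a single legal tile placement, with colour matches $\rright(T_L)=\lleft(T)$ and $\lleft(T_R)=\rright(T)$ and correct alternation of $\eloise$; and (iii)~propagates $\win$: at any $\win$-node, either $\ind_1\aand\col_1(T_{s+1})$ already holds, or $\eloise\limplies\lozenge\win$ and $\neg\eloise\limplies(\lozenge\top\aand\Box\win)$ (the $\lozenge\top$ encodes the rule that Abelard, if unable to move, wins).

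For the \textbf{forward direction}, given a winning strategy $\sigma$ for Eloise, I would form the $\Delta$-expansion $\c B$ of $\sigma$ described in Section~\ref{sec:reduction} and push it into $V$ via ${}'$: set $f$ to hold exactly on $F$; at each $x'\in V$ with $x\in\c B$, set the game-labels to their intended values; elsewhere set every proposition to $\bot$. The forward direction of \eqref{cond} supplies, for every tree edge $(x,y)$, a common upper bound $z$ whose past avoids $F$, witnessing $\F(\P(\cdot)\aand\H\neg f)$ in the definition of $\lozenge$. The converse direction of \eqref{cond} rules out such a $z$ for non-adjacent pairs, and $(i,j)^{++}\neq(i,j)$ (valid since $b\geq 3$) separates children from parents among the neighbours of $x$ in $\c B$. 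Hence $\lozenge\varphi$ at $x'$ reduces to `$\varphi$ holds at $y'$ for some child $y$ of $x$', and every conjunct of $\phi_\c I$ then holds at $\varepsilon'$ by induction on the strategy.

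For the \textbf{backward direction}, given $\mathfrak{M}, v_0\models\phi_\c I$, I would unfold from $v_0$ a play-tree by using the $\lozenge\win$ and $\Box\win$ conjuncts to pick, respectively, one and all $\win$-satisfying successors; decoding each block of $b$ consecutive levels through the $\col$ labels recovers a subtree of $\c T(\c I)$ whose branches obey the move rules. Since the game has at most $(s+2)^n$ distinct positions, the standard memoryless-strategy collapse for finite $2$-player reachability games then condenses this into a finite winning strategy for Eloise. The formula $\phi_\c I$ clearly has polynomial size in $|\c I|$, completing the reduction. The main obstacle I expect is the forward-direction analysis of the modalities $\Box$ and $\lozenge$: condition~\eqref{cond} must be leveraged in both directions simultaneously, producing common upper bounds for every tree edge while ruling them out for every non-edge, and the two roles are made compatible only by the careful use of the $\H\neg f$ clause inside $\lozenge$.
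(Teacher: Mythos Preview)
Your forward direction is essentially the paper's, and it works. The backward direction, however, has a genuine gap.

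The recursive clause you use for $\win$---target reached, or $\eloise\limplies\lozenge\win$, or $\neg\eloise\limplies(\lozenge\top\aand\Box\win)$---only says that the set of $\win$-points is a \emph{post-fixpoint} of the one-step Bellman operator. For a reachability objective (Eloise must reach $T_{s+1}$ in column~1; infinite play is a win for Abelard), the Eloise-winning region is the \emph{least} fixpoint of that operator, and the greatest fixpoint can be strictly larger. Concretely, take any instance in which both players can always move but Eloise can never force $T_{s+1}$ into column~1. Embed the full (binary-encoded) game tree via ${}'$, label every node $\win$, and set the remaining propositions to encode the actual game positions. Every conjunct of your $\phi_{\c I}$ then holds at the root, yet Eloise has no winning strategy. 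Your appeal to ``memoryless-strategy collapse'' does not close this: positional determinacy tells you that \emph{if} Eloise wins then she wins positionally, not that any post-fixpoint labelling certifies a win.

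The paper repairs exactly this point by adjoining a binary \emph{counter} $q_1,\dots,q_L$ (with $L$ the bit-length of $N=2n(s+2)^n$), initialising it to~$0$, incrementing it along each $\Box$-step, and adding the clause $\UM(\beta\wedge(\counter=N)\limplies\neg\win)$. This forces the $\win$-region to be well-founded of depth $<N$, so the recursive clause now pins down the least fixpoint and the unfolding you describe terminates in a genuine winning strategy. The counter contributes only polynomially many symbols, so the reduction stays polynomial. (As a minor aside: in the forward direction you should embed the \emph{full} game tree, not just $\sigma$; otherwise the conjunct ``all legal Abelard moves are present''---which you need so that $\Box\win$ really quantifies over all of Abelard's options---cannot be verified.)
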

Note that the map $'$ of Theorem~\ref{thm:main} is necessarily injective, since if $x_1' = x_2'$ then condition~\eqref{cond} implies $x_1$ and $x_2$ have (excluding each other) the same neighbours in $\mathcal{B}$.

As mentioned previously, given an instance $ \c I = ((T_0, \dots, T_{s+1}),\allowbreak (I_1, \dots, I_n))$ of the two-person corridor-tiling game, 
 we construct a formula $\phi_{\c I}$ that is satisfiable in $(V, <)$ if and only if Eloise has a winning strategy for $\mathcal T(\c I)$.  
The formula $\phi_{\c I}$ will state that the initial position is a winning position for Eloise.

The formula $\phi_{\c I}$ is the conjunction of all the conditions \ref{form:first}--\ref{form:last} we are about to lay out.  We follow the structure of the presentation of the reduction to PDL satisfiability given on pages 398--401 of \cite{blackburn_rijke_venema_2001}. That reduction originates from \cite{CHLEBUS1986374}. Most of the conditions have the $\UM$ modality in outermost position, so that what is inside this modality holds throughout the play. We omit the phrase `Throughout the play\dots' from the textual descriptions of these conditions.
\medskip

\begin{enumerate}[I]

\item\label{form:first} Position and depth at start of play:
\[\eloise \aand \beta(1,0)\aand \col_0(T_0) \aand \col_1({I_1}) \aand \dots \aand \col_n({I_n}) \aand \col_{n+1}(T_0).\]

\item {Nodes have a unique index:}
\[\UM \bigwedge_{1\leq i,j\leq n;\; i \neq j} (\ind_i \limplies \neg \ind_j)\]

\item {Nodes have a unique depth:}
\[\UM \bigwedge_{0\leq i,j\leq b-1;\; i \neq j} (\rank_i \limplies \neg \rank_j)\]

\item {The depths partition the same set as the indexes:}
\[\UM (\beta \leftrightarrow \bigvee_{0\leq j\leq b-1} \rank_j)\]

\item {In every column $i$, at least one tile type was previously placed:}
\[\UM(\beta \rightarrow  \bigwedge_{0\leq i\leq n+1} (\col_i(T_0) \oor \dots \oor \col_i(T_{s+1}))).\]
\item {In every column $i$, at most one tile type was last placed:}
\[\UM(\col_i(T_u) \limplies \neg\col_i(T_v))\qquad(0 \leq i \leq n+1\text{ and }0 \leq u \neq v \leq s+1).\]
\item {Tile $T_0$ is already placed in columns $0$ and $n+1$:}
\[\UM (\beta \rightarrow (\col_0(T_0) \aand \col_{n+1}(T_0))).\]
\enumeratext
{
With these preliminaries behind us, we can now describe the structure of the game tree.
}
\item {No change in game position when depth is less than $b-1$:}
\[
\UM ( \beta\wedge\neg\rank_{b-1}\limplies (\eloise\limplies \Box\eloise) \aand (\neg \eloise \limplies \Box \neg \eloise))
\]
 and for $1 \leq i \leq n\text{ and }0 \leq u \leq s+1$
\[
 \UM ( \beta\wedge\neg\rank_{b-1}\limplies  (\col_i(T_u) \limplies \Box \col_i(T_u) \aand (\neg\col_i(T_u) \limplies \Box\neg \col_i(T_u))).
 \]

\item {In columns where no tile is placed, nothing changes when a move is made:}
  for all $1\leq i, j \leq n$ with $j\neq i$ and all $0\leq u\leq s+1$
\[\UM(\pos(i, b-1) \limplies (\col_j(T_u) \limplies \Box \col_j(T_u)) \aand (\neg\col_j(T_u) \limplies \Box \neg \col_j(T_u)))\]

\item {Players alternate:}
\[\UM(\rank_{b-1}\limplies (\eloise  \limplies \Box \neg \eloise) \aand (\neg \eloise \limplies \Box \eloise)).\]
\enumeratext{
Next, both players make legal moves; that is, they only place tiles that correctly match adjacent tiles. It will be helpful to define the following ternary relation of `compatibility' between tile types: 
\[C(T',T,T'') \iff \rright(T') = \lleft(T)\text{ and }\down(T) = \up(T'').\]
 That is, $C(T', T, T'')$ holds if and only if the tile $T$ can be placed to the right of tile $T'$ and above tile $T''$. With the aid of this relation we can formulate the first constraint on tile placement as follows.}

\item {Adjacencies:} for all $1\leq i\leq n$ and all tile types $T'$ and $T''$,
\[\UM(\pos(i, b-1) \aand \col_{i-1}(T') \aand \col_i(T'') \limplies \Box \Or \{\col_i(T) \mid C(T',T,T'')\}).\]
(Here, by convention, $\Or \emptyset = \bot$.)
\enumeratext{The  previous constraint only ensures matching to the left and downwards. We also need to ensure that tiles placed in column $n$ match the white wall tile to their right.}
\item {Match white on right in column $n$:}
\[\UM(\beta \rightarrow \Or\{ \col_n(T) \mid \rright(T)=\white\}).
\]
\item {All possible Abelard moves are included:} for all $1 \leq i < n$,
\begin{align*}
&\hspace{-17.36pt}\UM(\neg\eloise \aand \rank_{b-1} \aand \col_i(T'') \aand \col_{i-1}(T') \limplies\And \{\lozenge\col_i(T) \mid C(T', T, T'')\}),
\end{align*} 
(Here, by convention, $\And \emptyset = \top$).

\enumeratext{This completes the description of the game tree. The remaining conditions ensure Eloise has a winning strategy.}

\item {The initial position is a winning position for Eloise:}
\[\win.\]
\item {Recursive conditions:}
\[\UM(\win \limplies \begin{array}[t]{l}
(\col_1(T_{s+1}) \oor 
(\neg \eloise\aand \lozenge\top\wedge\Box \win)
 \oor (\eloise \aand \lozenge \win))).
 \end{array}\]
 \enumeratext{
To bound the length of the game, first define $N =2 n (s+2)^n $. If the game goes on for $N$ plays, then the position has repeated,\footnote{Including the initial position, $2 n (s+2)^n +1$ positions have occurred.}  which does not help Eloise: if she can win, she can do so in fewer than $N$ moves. Let $L$ be the number of binary digits necessary to write $N$.  We introduce new propositions $q_1, \ldots, q_L$ to denote the sequence of bits from the least to the most significant  digit of a $\counter$.}

\item {Counter is initially $0$:}
\[\neg q_L \aand \dots \aand \neg q_1.\]
\item {Counter does not change when $1\leq j\leq b-1$:}
\[
\UM(\beta\wedge \neg\rank_{b-1}\limplies \bigwedge_{k=1}^L((q_k\limplies\Box q_k)\wedge(\neg q_k\limplies\Box\neg q_k)).\]

\item {Incrementation of   counter when least-significant digit is $0$:}
\[
\UM(\rank_{b-1} \aand \neg q_1 \limplies  \Box q_1 \aand \And_{k =2}^L ((q_k \limplies \Box q_k) \aand (\neg q_k \limplies \Box\neg q_k))) .
\]
\item {Incrementation of counter when least-significant digit is $1$:}
for  $1\leq k< L$,
\begin{align*} 
\UM( &\rank_{b-1}\aand \neg q_{k+1} \aand \And_{l = 1}^k q_l \limplies
\\
&\Box ( q_{k+1} \aand \And_{l = 1}^k \neg q_l) \aand\And_{l=k+2}^L ((q_l\limplies \Box q_l )\aand (\neg q_l\limplies\Box \neg q_l) )).
\end{align*}
\item\label{form:last} {If the counter reaches $N$ then the game has gone on too long and Abelard wins:}
\[\UM( \pos \aand (\counter = N)  \limplies \neg\win).\]
\end{enumerate}

This concludes the definition of $\phi_\c I$.
We are now in a position to prove Theorem \ref{thm:main}.

\begin{proof}
We argue that whenever $(V, <)$ satisfies  conditions of the theorem (i.e. there is $\prime, F$ satisfying \eqref{cond}), then the derivation of $\phi_{\c I}$ from $\c I$ is a correct, polynomial-time reduction of the two-player corridor-tiling problem to satisfiability of $(\F, \P)$-formulas over $(V, <)$. Since the two-player corridor-tiling problem is EXPTIME-hard \cite{CHLEBUS1986374}, it follows that satisfiability of $(\F, \P)$-formulas over $(V, <)$ is too.

First note that the length of $\phi_{\c I}$ is polynomial in $s$ and $n$, at which point it is clear that $\phi_{\c I}$ can be calculated from $\c I$ in time polynomial in the size of (the encoding) of $\c I$.
We now argue that the reduction is correct.

\emph{Formula is satisfiable implies Eloise has a winning strategy:} Suppose $v$ is a valuation and $r \in V$ with $(V, <), v, r \models \phi_\c I$. Define a tree rooted at $r$ by setting the children of a node $x$ satisfying $\beta(i, j)$ to be \[\{y \models \beta(i, j)^+ \mid \exists z : z > x \wedge z > y \wedge \forall w(w < z \implies w \not\models f)\}\text{.}\] By the construction of $\phi_\c I$, this determines a labelled tree $\c B$ encoding a portion the game tree $\c T(\c I)$ sufficient for defining a strategy.  If this $\c B$ is not technically a tree because there is some unwanted sharing of descendants, then unwind it into the tree of paths in this directed graph that depart from $r$. Note also that there is no particular reason for $\c B$ to be binary. The formula $\phi_{\c I}$ further ensures that the initial position  is a position from which Eloise can force a win. Hence Eloise has a winning strategy.

\emph{Eloise has a winning strategy implies formula is satisfiable:} Assume the condition in the statement of Theorem~\ref{thm:main}.   Let $\c T(\c I)$ be a  labelled game tree for the instance $((T_0, \ldots, T_{s+1}), (I_1, \ldots, I_n))$.   As we saw, we can encode $\c T(\c I)$ as a labelled binary tree $\c B$ in which each node has a  position and depth, and by the hypothesis of the theorem, there is an embedding $'$ of $\c B$ as  $\c B'\subseteq V$  together with a subset $F$ of $V$ satisfying condition~\eqref{cond}.   

 Define a propositional valuation $v:\mathrm{Prop} \to \wp(V)$ by:  
\begin{itemize}
\item $v(f)=F$, \; 
\item$ v(\ind_i)=\set{b'\in \c B'\mid b\mbox{ has column-index $i$}}$,\;
\item $v(\rank_j)=\set{b'\in \c B'\mid b\mbox{ has depth $j$}}$,\; 
\item  $v(\eloise)$ consists of points in $\c B'$ where it is Eloise's turn,
  \item $v(\col_i(T))=\set{b'\in \c B'\mid T\mbox{ was last placed in column $i$ at $b$}}$.
  \end{itemize}  We may define $v(q_i)$ for $1\leq i\leq L$   so that the $q_i$s that hold at $b'$ encode, as a binary number,  the minimum of $2^L-1$ and 
  the number of edges on a path from $b$  to the root  $r$ of $\c T(\c I)$.   Let $\win$ hold at all points in the range of $'$ where Eloise has a winning strategy and the counter is less than $N$, but nowhere else.

Let $b$ be any node of the binary tree $\c B$, say $b'\models\beta(i, j)$ (some $i, j$).  Then $b$ is adjacent to two child nodes $c_1, c_2$ and perhaps to its parent, but no other node, and $\beta(i,j)^+$ holds at $c_1'$ and at $c_2'$.
By \eqref{eq:loz} and \eqref{cond},  if
$\varphi$ holds at $c_1'$ or at $c_2'$ then $\lozenge\varphi$ holds at $b'$.  Conversely, if $\lozenge\varphi$ holds at $b'$ then by  \eqref{eq:loz}   and \eqref{cond}, we know that $\beta(i,j)^+\wedge \varphi$ must hold where a node adjacent to $b$ in $\c B$ embeds. However,  $\beta(i,j)^+$ does not hold where the parent of $b$ embeds, so  $\varphi$ must hold at $c_1'$ or at $c_2'$.

 By the assumption that Eloise has a winning strategy, $r'\models\win$; other conjuncts of $\phi_{\c I}$ are now easily verified.  Thus $(V, <), v, r'\models \phi_{\c I}$, as required. 
\end{proof}

\section{Minkowski spacetime frames}\label{sec:minkowski}

In this section we apply Theorem~\ref{thm:main} to our target frames, thereby proving EXPTIME lower bounds.

\begin{lemma}\label{lem:circ}
Let $ a, b, c, d$ be  points on a circle $C$, in that order, as you go round the circle (clockwise or anticlockwise).  Every disc containing $a$ and $c$ also contains $b$ or $d$.

\end{lemma}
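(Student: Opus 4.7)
The plan is a short topological argument. Fix any disc $D$ containing $a$ and $c$, and set $\alpha := C \cap D$; the goal is to show that $b \in \alpha$ or $d \in \alpha$. The key step will be to establish that $C \setminus \alpha$ is connected as a subset of $C$.

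Granting this, the lemma drops out immediately. Since $a, c \in \alpha$, the cyclic order of $a, b, c, d$ on $C$ means that $C \setminus \alpha \subseteq C \setminus \{a, c\}$, and $C \setminus \{a, c\}$ splits into exactly two open arcs $A_b \ni b$ and $A_d \ni d$. A connected subset of a two-component space lies entirely in one component, so $C \setminus \alpha$ is contained in $A_b$ or in $A_d$. In the first case $A_d \subseteq \alpha \subseteq D$, hence $d \in D$; in the second case, symmetrically, $b \in D$.

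For the connectedness of $C \setminus \alpha$, I would appeal to the elementary plane-geometric fact that two distinct circles meet in at most two points. Applied to $C$ and the boundary circle $\partial D$, this gives $|C \cap \partial D| \leq 2$ (the degenerate case $C = \partial D$ makes $\alpha$ coincide with $C$ up to boundary issues if $D$ is open, and is easy to handle directly). The at-most-two points of $C \cap \partial D$ partition $C$ into at most two closed sub-arcs, and each such sub-arc lies entirely in $D$ or entirely in the complement of $D$. A short case analysis on $|C \cap \partial D| \in \{0,1,2\}$ then confirms that $C \setminus \alpha$ is always either empty, a single point, or a single arc, hence connected, regardless of whether $D$ is taken to be open or closed. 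The only real obstacle in the whole proof is the bookkeeping in this case analysis, especially around tangencies and the open/closed convention for $D$; once that is settled the result follows by the one-line application of connectedness above.
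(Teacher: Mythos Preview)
Your argument is correct and rests on the same key fact as the paper's: two distinct circles meet in at most two points. The paper, however, packages this more directly. It argues by contradiction: if $b,d\notin D$, then on each of the four arcs $a\!-\!b$, $b\!-\!c$, $c\!-\!d$, $d\!-\!a$ the boundary $\partial D$ must cross $C$ (one endpoint is in $D$, the other is not), giving four distinct points of $C\cap\partial D$, which is impossible for two distinct circles. Your connectedness route reaches the same conclusion through an extra layer---first showing $C\setminus\alpha$ is a single arc, then using that a connected set lies in one component of $C\setminus\{a,c\}$. The paper's version sidesteps the case analysis you flag (in particular the mild subtlety that when $|C\cap\partial D|=2$ the two arcs of $C$ must lie on opposite sides of $\partial D$) by simply counting crossings. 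Both are sound; the paper's is a line shorter.
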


\begin{proof}
For contradiction, suppose $D$ is a disc containing $a$ and $c$ but neither $b$ nor $d$. Since $b\not\in D$, but $a\in D$, the boundary of $D$ must cross $C$ between $a$ and $b$.  Similarly, the boundary of $D$ meets $C$ three more times, between $b$ and $c$, between $c$ and $d$, and between $d$ and $a$.  Since $a, c\in D$ and $b, d\not\in D$, the four points in $C$ on the boundary of $D$ must be distinct.  But two circles intersecting more than twice must be identical, contradicting $a, c\in D$ and  $b, d\not\in D$.
\end{proof}

\begin{theorem}\label{thm:spacetime}
Validity of temporal formulas over $(m+1)$-dimensional Min\-kow\-ski spacetime is EXPTIME-hard for $m\geq 2$ with `lightspeed-or-slower' or with `slower-than-lightspeed' accessibility, 
 either the reflexive or irreflexive versions.
\end{theorem}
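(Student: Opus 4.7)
To apply Theorem~\ref{thm:main}, I construct, for each of the four Minkowski variants, an embedding ${}':\c B \to V$ of the infinite complete binary tree and a forbidden set $F \subseteq V$ satisfying condition~\eqref{cond}. The plan is to place both $\c B'$ and $F$ inside the spatial hyperplane $\{t = 0\}$, and in fact inside a fixed $2$-dimensional affine subspace of that hyperplane (using that $m \geq 2$). For any point $z$ with $z_t > 0$, the past of $z$ intersected with $\{t=0\}$ is an $m$-ball---closed for lightspeed-or-slower, open for slower-than-light---centred at the spatial projection of $z$ with radius $z_t$, and every such ball arises this way. Since any such $z$ lies strictly above the hyperplane it cannot itself be in $F$, so the reflexive and irreflexive cases are handled uniformly. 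Condition~\eqref{cond} thus reduces to: distinct $x, y \in \c B$ are adjacent iff there exists a ball in the plane containing $\{x', y'\}$ and disjoint from $F$.

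The construction will exploit Lemma~\ref{lem:circ} as follows. I lay out the binary tree in the plane via a self-similar recursive scheme with exponentially shrinking separation across depths, so that subtrees rooted at distinct nodes occupy well-separated regions. For each pair of distinct non-adjacent tree nodes $x, y$, I add two ``blocker'' points $f_1, f_2$ to $F$ chosen to lie with $x', y'$ on a common circle in cyclic order $(x', f_1, y', f_2)$; by Lemma~\ref{lem:circ}, every disc containing $\{x', y'\}$ then contains $f_1$ or $f_2$ and so meets $F$. For each edge $(u, v)$ of $\c B$, I conversely choose a sufficiently small disc around the chord $\overline{u' v'}$; the scale separation will ensure this disc avoids every (of countably many) blocker points of $F$. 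The distinction between open and closed balls is absorbed by placing points in general position so that small perturbations of radius do not change incidence.

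The main obstacle is designing the embedding and $F$ so that the blocker points introduced for non-adjacent pairs do not inadvertently obstruct any edge. Rapid scale-shrinkage is the key: a blocker placed to separate a non-adjacency at one depth lies far enough from any edge at deeper depths that small discs around deeper edges escape it, and vice versa. Once this simultaneous verification of edge-clearance and non-edge-blocking goes through, Theorem~\ref{thm:main} immediately yields the desired EXPTIME lower bound for all four Minkowski variants.
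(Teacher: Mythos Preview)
Your high-level reduction matches the paper's exactly: embed $\c B$ into a fixed $2$-plane inside $\{t=0\}$, observe that past cones cut this plane in (open or closed) discs, and translate condition~\eqref{cond} into ``adjacent in $\c B$ iff some disc through the two image points avoids $F$''. You also correctly note that the reflexive/irreflexive distinction is irrelevant once $F$ lives in $\{t=0\}$.

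Where you diverge from the paper---and where the gap lies---is in the construction of $F$. You propose to populate $F$ pair-by-pair: for every non-adjacent pair $(x,y)$ you insert two blocker points placed so that Lemma~\ref{lem:circ} forbids any $F$-free disc through $x',y'$. You then assert that ``scale separation'' guarantees that, for each edge $(u,v)$, a small enough disc about the chord $\overline{u'v'}$ misses every blocker. This step is not justified and is genuinely delicate. The minimal disc through $u'$ and $v'$ has diameter $|u'-v'|$, so ``sufficiently small'' is bounded below; meanwhile, among the non-adjacent pairs you must block are $(u,w)$ for every grandchild $w$ of $u$ through $v$, and $w'$ sits at distance $\tfrac12|u'-v'|$ from $v'$ in any self-similar halving scheme. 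The blockers for $(u,w)$ must lie on a circle through $u'$ and $w'$ with one blocker on each arc; one of those arcs threads between $u'$ and $w'$ right through the region any edge-disc for $(u,v)$ must occupy. You have freedom in choosing which circle and where on it the blockers sit, but you have to make a \emph{simultaneous} choice for all such pairs (and for the infinitely many deeper grandchildren, great-grandchildren, siblings-of-descendants, etc.), and you give no mechanism for doing so coherently. ``General position'' and ``rapid shrinkage'' do not by themselves resolve this; a single bad blocker at the wrong scale kills the edge.

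The paper sidesteps this entirely by taking $F$ to be a sparse, explicitly self-similar family of half-lines (one per sibling pair, plus two bounding lines), not a cloud of per-pair points. Convexity alone then blocks cousins and sibling pairs (the segment between them crosses a forbidden half-line), and a single explicit circle computation at the root together with Lemma~\ref{lem:circ} blocks grandparent--grandchild pairs. A one-parameter family of plane similarities preserving both $\c B'$ and the complement of $F$ transports the root computation to every node. Your per-pair scheme has no such symmetry, so you would need an ad hoc verification for infinitely many interacting constraints, which you have not supplied.
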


\begin{proof}
Let $(V, <)$ be any of the frames in the Theorem; clearly the frame is transitive and confluent. 
  To apply Theorem~\ref{thm:main}, we need to find an embedding $'$ from  the complete binary tree $\c B$ of depth $\omega$ into $V$ together with a subset $F\subseteq V$, in such a way that condition~\eqref{cond}  is satisfied. To do this, we will embed $\c B$ in a  two-dimensional spacelike plane $P$ perpendicular to the time axis and select $F\subseteq P$. We could, for example, assume $P$ is the plane defined by  $x_3 = x_4 =\dots=x_m = t = 0$. Points in $P$ are determined by a pair $(x, y)$ of real coordinates with respect to some fixed choice of orthonormal basis for $P$ (equipped with the standard inner product). Intersections of past light cones with $P$ are  discs (closed or open depending on whether lightspeed is included in $<$).   In the following, $D$ will range over all discs of the appropriate type---closed if lightspeed is included, open otherwise.  

To apply Theorem~\ref{thm:main}, we need  to establish condition~\eqref{cond}, which is equivalent to the following, for all $p\neq q\in \c B$,
\begin{equation}\label{eq:D} (p, q)\in E(\c B)\vee (q, p)\in E(\c B) \iff \exists D (p', q'\in D\wedge (D\cap F=\emptyset))
\end{equation}
For $p', q'$ let $p'\sim q'\iff \exists D(p', q'\in D\wedge (D\cap F=\emptyset))$.
An embedding $'$ and forbidden set $F$ satisfying~\eqref{eq:D}  is shown in Figure~\ref{fig:B}.

The nodes of $\c B$ may be written as finite strings of bits, and the two children of string $s$ are  obtained by appending $0$ or $1$ to the end of $s$.  For the embedding, map string $s=(b_0b_1\ldots b_{k-1})$ to $\sum_{i<k,\; b_i=0}2^{-i}(1,0)+ \sum_{i<k,\;b_i=1} 2^{-i}(0,1)$, so the root (the empty string) maps to $(0, 0)$, its two children $0$ and $1$ map to $(1, 0)$ and $(0, 1)$ respectively, its grandchildren $00$, $01$, $10$, and $11$ map to $(\frac32,0)$, $(1, \frac12)$, $(\frac12, 1)$, and $(0, \frac32)$ respectively, and so on; see Figure~\ref{fig:B}.

  The lines $y=x-2$ and $y=x+2$ are included in $F$.  Also, if $\mu$ is the midpoint in $P$ of (the images of) a pair of  siblings, every point  at or  above/right of $\mu$ on the line $y=x+c$ passing through $\mu$, is contained in $F$.  Nothing else is included in $F$.   These forbidden lines are shown in red in Figure~\ref{fig:B}.
\begin{figure}
\begin{center}
\begin{tikzpicture}[scale=1.3]
\draw (0,0) -- (1,0);
\draw (0,0)-- (0,1);
\filldraw[black] (0,0) circle (1.5pt) node[anchor=north] {Root};
\filldraw[black] (1,0) circle (1pt) node[anchor=north] {0};
\filldraw[black] (0,1) circle (1pt) node[anchor=east] {1};
\filldraw[black] (0,1.5) circle (.7pt) node[anchor=east] {{\tiny {11}}};
\filldraw[black] (0.5,1) circle (.7pt) node[anchor=south] {{\tiny {10}}};

\draw (0,1.5)--(0, 1.75);
\draw(0,1.5)--(.25,1.5);

\draw[red](.25,1.25)--(1,2);
\draw[red](1.25,.25)--(2,1);

\draw(0,1)--(0, 1.5);
\draw(0, 1)--(.5,1);
\draw(1,0)--(1.5,0);
\draw(1, 0)--(1, .5);

\draw[red](.5,.5)--(2,2);
\draw[red](1, -1)--(2,0);
\draw[red](-1,1)--(0,2);
\filldraw[red] (.5,.5) circle (1pt) node[anchor=east] {};

\filldraw[red] (.25,1.25) circle (.7pt) node[anchor=east] {};
\filldraw[red] (1.25,.25) circle (.7pt) node[anchor=east] {};

\draw[dashed](-.25,0.5) circle (.667);

\filldraw[black] (-.25,.5) circle (.5pt) ;


\end{tikzpicture}
\hspace{.9in}
\begin{tikzpicture}[scale=1.3]
\draw (0,0) -- (1,0);
\draw (0,0)-- (0,1);
\filldraw[black] (0,0) circle (1.5pt) node[anchor=north] {Root};
\filldraw[black] (1,0) circle (1pt) node[anchor=north] {0};
\filldraw[black] (0,1) circle (1pt) node[anchor=east] {1};
\filldraw[black] (0,1.5) circle (.7pt) node[anchor=east] {{\tiny {11}}};
\filldraw[black] (0.5,1) circle (.7pt) node[anchor=south] {{\tiny {10}}};

\draw (0,1.5)--(0, 1.75);
\draw(0,1.5)--(.25,1.5);

\draw(0.5,1)--(0.75,1);

\draw(0.5,1)--(0.5,1.25);

\draw[red](.25,1.25)--(1,2);
\draw[red](1.25,.25)--(2,1);

\draw(0,1)--(0, 1.5);
\draw(0, 1)--(.5,1);
\draw(1,0)--(1.5,0);
\draw(1, 0)--(1, .5);
\draw[red](.5,.5)--(2,2);
\draw[red](1, -1)--(2,0);
\draw[red](-1,1)--(0,2);
\filldraw[red] (.5,.5) circle (1pt) node[anchor=east] {};

\filldraw[red] (.25,1.25) circle (.7pt) node[anchor=east] {};
\filldraw[red] (1.25,.25) circle (.7pt) node[anchor=east] {};

\filldraw[red] (-.75,1.25) circle (.74pt) node[anchor=east] {};


\draw(-.219,.719) circle(.75);


\end{tikzpicture}
\end{center}
\caption{\label{fig:B}Embedding of  complete binary tree $\c B$, and forbidden set $F$ (shown in red) in a plane, to scale.  Left:  parent $\sim$ child; a  disc containing root and child, disjoint from $F$.  Right:   grandparent $\not\sim$ grandchild; a circle through $(0, 0),    { \color{red}   (\frac12,\frac12)}$, and ${\color{red} (-\frac34,\frac54)}$; nodes $11, 10$, and descendants are outside the circle, between the two forbidden nodes.}
\end{figure}
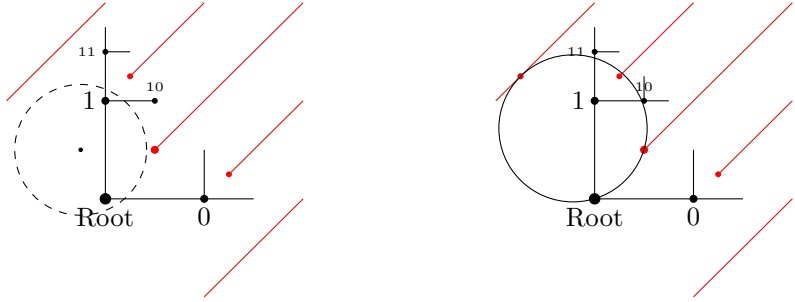

For the left-to-right implication of \eqref{eq:D}, first suppose $p$ is the root of $\c B$. Consider the open disc $O_{\frac23}(-\frac14,\frac12)$ shown on the left of Figure~\ref{fig:B} (to scale), and first observe that the root and its child $1$  map into the disc, since  $\frac{\sqrt 5}4 <\frac 23$. Then observe that all forbidden points are outside the closure of the disc, since the nearest point on the line $y = x+2$ is $(-\frac 7 8, \frac 9 8)$ at a distance $\frac{5\sqrt 2}8$, and the nearest points in $F$ on the lines $y = x+1$ and $y = x$ are the endpoints $(\frac 1 4, \frac 5 4)$ and $(\frac 1 2, \frac 1 2)$ at distances $\frac{\sqrt{13}}4$ and $\frac 3 4$, respectively. 
 Similarly,  the root and its other child, $0$, map into  a disc disjoint from $F$. Cases where $p$ is not the root are treated shortly.

For the right-to-left implication of \eqref{eq:D},  suppose $D$ avoids forbidden nodes, and $p'\neq q'\in D$.  By convexity of $D$ all points on the line segment from  $p'$ to $q'$ belong to $D$,  which is disjoint from $F$.    The midpoint of (the images of) two siblings is in $F$, so $p$ and $q$ cannot be siblings.    Moreover, if $p$ and $q$ are descendants of two distinct siblings  then there is a point on the line segment between $p'$ and $q'$  that is forbidden, so this case cannot happen either.  So either $p$ is a descendant of $q$, or $q$ is a descendant of $p$.

We claim that $p$ and $q$ cannot be more than one generation apart.  Again, we first prove this claim when $p$ is the root, which embeds to $(0, 0)$; thus $q$ is a descendent of the root (i.e.\ any node of the tree).    The circle $C_{\frac{\sqrt{49+529}}{32}}(-\frac7{32}, \frac{23}{32})$ meets $y=x+2$ tangentially at $(-\frac34,\frac54)$ and passes through the node at $(0, 0)$ and the forbidden point $(\frac12, \frac12)$. This is pictured in the right-hand diagram of Figure~\ref{fig:B}.  The forbidden point at $(\frac14, \frac54)$  happens to lie inside the circle; we make no use it here, but note this point only makes it harder to find discs avoiding forbidden nodes.  Here we use the forbidden points   $(-\frac34,\frac54)$ and $(\frac12, \frac12)$.
 All descendants of $11$, at $(0, \frac32)$, or of $10$, at $( \frac12, 1)$,  lie, from the point of view of $(0,0)$, between these two forbidden nodes and are outside the circle, since  $\frac{\sqrt{49+625}}{32}, \frac{\sqrt{529+81}}{32} > \frac{\sqrt{529+48}}{32}$. (Reading glasses may be needed to see that $10$ is outside the circle in Figure~\ref{fig:B}.)
     Thus every disc containing $(0, 0)$ and avoiding forbidden nodes, excludes $11$, $10$, and their descendants, else by convexity the disk would also contain a point $c$ on $C_{\frac{\sqrt{49+529}}{32}}(-\frac7{32}, \frac{23}{32})$, such that $(0,0)$,  $(-\frac34,\frac54)$, $c$, $(\frac12, \frac12)$ is the clockwise order, contradicting Lemma~\ref{lem:circ}. So $q$ is none of $11$, $10$, or their descendants.    
    By a symmetric argument, we see that $q$ cannot be $00$, $01$, or their descendants.    
It follows that $q$ must be a child, either $0$ or $1$, of the root, proving the right-to-left implication in \eqref{eq:D}, for the case where $p$ is the root.

To generalise to arbitrary $p$,  we now define a $P$-similarity.
 Recall that   $P\subseteq V=\reals^{m+1}$ is a two-dimensional spatial plane, the image of  ${^\prime}:\c B\to V$  is included in $P$, and  $F\subseteq V$.  Let $\c B'=\set{p' \mid p\in\c B}$ and $E'=\set{(p',q')\mid (p, q)\in E(\c B)}$.   A \emph{$P$-similarity} $\sigma:P\to P$ is a similarity of $P$ with respect to the Euclidean metric over $P\subseteq\reals^{m+1}$,   such that
\begin{itemize}
\item $p'\in  \c B'\implies \sigma(p')\in\c B'$ (`nodes map to nodes'),
\item $(p', q')\in E'\implies (\sigma(p'), \sigma(q'))\in E'$ (`edges map to edges'),
\item $\sigma(P \setminus F) \subseteq P \setminus F$ 
 (`permitted points map to permitted points').
\end{itemize}
for all $p, q\in\c B$. Observe that if $\sigma$ is a $P$-similarity and $p'\in D$ for some disc $D$ disjoint from $F$, then $\sigma[D]$ is a disc disjoint from $F$  containing $\sigma(p')$.
The map $(x, y)\mapsto (1+\frac x2, \frac y2)$ is a $P$-similarity mapping $(0, 0)$ to $(1, 0)\; (=0')$,
and likewise for all $p\in\c B$ there is a $P$-similarity mapping $(0, 0)$ to $p'$.  Hence the  left-to-right implication of \eqref{eq:D} holds for all $p\in\c B$.   The proof of the right-to-left implication of \eqref{eq:D} goes through when $p$ is not the root,  since there is a $P$-similarity mapping $(0, 0)$ to $p'$ carrying the forbidden points $(-\frac34,\frac54)$ and $(\frac12, \frac12)$ to forbidden points.
\end{proof}

\section{Open problems}\label{sec:problems}
\begin{enumerate}[(1)]
\item Our main result is the EXPTIME-hardness of the temporal validity problem over the Minkowski spacetimes $(\reals^n, \leq), (\reals^n, <), (\reals^n, \prec), (\reals^n, \preceq)$,  for $n\geq3$, but we have no upper bound on the complexity of these frames; we do not know if the logic of any of these frames is  decidable, nor even if they are recursively enumerable.   For $n=2$ (only one spatial dimension) we know that all four validity problems are  PSPACE-complete \cite{HR18,HM18}.   Find more precise bounds on the complexity of the validities of $(\reals^n, <)$, for $n\geq 3$.
\item There are purely modal formulas satisfiable in $(\reals^3, <)$ but not in $(\reals^2, <)$ \cite{Gol80}, and there are temporal formulas satisfiable in $(\reals^3,\leq)$ but not in $(\reals^2,\leq)$ \cite{HR18}.  Are there temporal formulas that distinguish $(\reals^3, <)$ from $(\reals^4, <)$?
\item Find sound and complete axioms for the temporal validities of any of these Minkowski frames. (These are open even with one spatial dimension.)
\end{enumerate}


\bibliographystyle{aiml22}
\bibliography{brettbib}

\end{document}